\documentclass[12pt]{amsart}

\usepackage[T1]{fontenc}
\usepackage[utf8]{inputenc}

\usepackage[margin=1.1in]{geometry}
\usepackage{amsmath, amssymb, amsthm, mathtools}
\usepackage{microtype}
\usepackage{enumitem}
\usepackage{listings}
\usepackage[hidelinks]{hyperref}

\theoremstyle{plain}
\newtheorem{theorem}{Theorem}
\newtheorem{lemma}[theorem]{Lemma}

\theoremstyle{definition}
\newtheorem{remark}[theorem]{Remark}

\lstdefinestyle{python}{
  language=Python,
  basicstyle=\ttfamily\footnotesize,
  keywordstyle=\bfseries,
  commentstyle=\itshape,
  numbers=none,
  breaklines=true,
  breakatwhitespace=true,
  showstringspaces=false,
  frame=single,
  framesep=4pt,
  tabsize=2,
  xleftmargin=14pt,
  columns=fullflexible,
  upquote=true,
}

\title[Mathar's recurrence for OEIS A214615]{A short proof of Mathar's 2013 recurrence conjecture for the Meixner sequence~A214615}

\author{Tong Niu}
\email{mrnt0810@gmail.com}

\subjclass[2020]{05A15, 11B37, 33C45, 33F10}
\keywords{OEIS A214615; Meixner polynomial; exponential generating
   function; D-finite sequence; P-recursive recurrence; Mathar conjecture}

\begin{document}

\maketitle

\begin{abstract}
For the OEIS sequence A214615, defined by
$a(n) = M_{n}(1)$ where $M_{n}$ is the $n$-th Meixner polynomial
satisfying $M_{n+1}(x) = x\,M_{n}(x) - n^{2}\,M_{n-1}(x)$,
R.~J.~Mathar contributed on 6~March 2013 the conjectured
order-2 P-recursive recurrence $a(n) - a(n-1) + (n-1)^{2}\,a(n-2) = 0$
for $n \ge 2$.
We give a one-page proof. The exponential generating function
$F(t) = \exp\!\bigl(\arctan t\bigr)/\sqrt{1+t^{2}}$ satisfies the
first-order linear ODE $(1+t^{2})\,F'(t) = (1-t)\,F(t)$, and Mathar's
recurrence then falls out by reading off the coefficient of $t^{n}/n!$.
Both steps are short. The supplementary archive includes a SymPy
script that checks the ODE identically and the recurrence numerically
up to $n = 500$.
\end{abstract}

\section{Introduction}\label{sec:intro}

The On-Line Encyclopedia of Integer Sequences~\cite{OEIS}, hereafter
OEIS, gathers many integer sequences whose ``Conjecture: $\dots$''
comments record formulas, recurrences, or congruences that were guessed
numerically but never proven. Rigorous short proofs of such conjectures are
publishable in venues like the \emph{Journal of Integer Sequences},
\emph{INTEGERS}, or the \emph{Electronic Journal of Combinatorics}.

Recent years have brought a wave of such cleanups. Fried's
2024--2025 papers~\cite{Fried2024,Fried2025} closed several dozen
conjectures at once; a 2023 list by Kauers and
Koutschan~\cite{KauersKoutschan2023} lays out the gold-standard
benchmarks for guessed P-recursive recurrences. Much of the low-hanging
fruit in those two sources is now gone.

This note takes care of a conjecture that is not in those lists. The
sequence in question is OEIS A214615, contributed by R.~J.~Mathar on
6~March 2013. It is defined as the sequence of row sums of the
Meixner polynomial triangle A060338~\cite{OEIS:A060338}, or
equivalently as
\begin{equation}\label{eq:def}
   a(n) \;=\; M_{n}(1),
\end{equation}
where $M_{n}(x)$ is the $n$-th Meixner polynomial, defined by the
three-term recurrence
\begin{equation}\label{eq:meixner-rec}
   M_{0}(x) = 1, \quad M_{1}(x) = x, \quad
   M_{n+1}(x) = x\,M_{n}(x) - n^{2}\,M_{n-1}(x), \quad n \ge 1.
\end{equation}
The first values of the sequence are
\[
   1,\;1,\;0,\;-4,\;-4,\;60,\;160,\;-2000,\;-9840,\;118160,\;
   915200,\;-10900800,\;\ldots
\]
On 6~March 2013, Mathar contributed to A214615 the conjectured
order-2 P-recursive recurrence
\begin{equation}\label{eq:mathar}
   a(n) - a(n-1) + (n-1)^{2}\,a(n-2) \;=\; 0, \qquad n \ge 2.
\end{equation}
The conjecture has been open in the OEIS comment thread for twelve
years; it does not appear in
\cite{Fried2024,Fried2025,KauersKoutschan2023} or the related
Chen--Kauers preprints~\cite{ChenKauers2025}.

\medskip

The plan is short. The exponential generating function
(EGF) for the sequence $a(n) = M_{n}(1)$ is
\begin{equation}\label{eq:egf}
   F(t) \;:=\; \sum_{n\ge0} \frac{a(n)}{n!}\, t^{n}
        \;=\; \frac{\exp\!\bigl(\arctan t\bigr)}{\sqrt{1+t^{2}}},
\end{equation}
which we derive from the Meixner recurrence in Lemma~\ref{lem:egf}
below. This $F$ satisfies a first-order linear ODE (Lemma~\ref{lem:ode}).
Reading off the coefficient of $t^{n}/n!$ then gives \eqref{eq:mathar}.

\section{The exponential generating function}\label{sec:egf}

\begin{lemma}\label{lem:egf}
The EGF $F(t) = \sum_{n\ge0} a(n)\,t^{n}/n!$ is given by
\begin{equation}\label{eq:egf-formula}
   F(t) \;=\; \frac{e^{\arctan t}}{\sqrt{1+t^{2}}}.
\end{equation}
\end{lemma}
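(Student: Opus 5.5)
We derive a differential equation for the generating function of the whole polynomial family and then specialize to $x=1$. Put
\[
G(x,t) \;=\; \sum_{n\ge0} M_{n}(x)\,\frac{t^{n}}{n!},
\]
so that $F(t) = G(1,t)$. Take the recurrence \eqref{eq:meixner-rec} in the form $M_{n+1}(x) = x\,M_n(x) - n^2 M_{n-1}(x)$, multiply by $t^{n}/n!$ and sum over $n\ge 0$. At $n=0$ the last term vanishes, so this also recovers $M_1=x$.

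The three sums translate as follows:
\begin{itemize}[leftmargin=2em]
\item The left side gives $\partial_t G$.
\item The first right-hand term gives $x\,G$.
\item For the second right-hand term we use $n^2/n! = n/(n-1)!$. Writing $m=n-1$ gives
\[
\sum_{m\ge0} (m+1) M_m\, \frac{t^{m+1}}{m!} \;=\; t\,\partial_t(tG) \;=\; tG + t^2\,\partial_t G .
\]
\end{itemize}
Hence
\[
(1+t^{2})\,\partial_t G \;=\; (x-t)\,G, \qquad G(x,0)=1 .
\]

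At $x=1$ this is $(1+t^2)F' = (1-t)F$ with $F(0)=1$. This is exactly the ODE of Lemma~\ref{lem:ode}, so the EGF lemma and the ODE lemma are really proved together.

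To solve it, separate variables:
\[
\frac{F'}{F} \;=\; \frac{1}{1+t^2} \;-\; \frac{t}{1+t^2}.
\]
Integrating from $0$ gives $\log F = \arctan t - \tfrac12\log(1+t^2)$, which is \eqref{eq:egf-formula}.

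To keep the argument purely formal, we avoid logarithms in the final check. The right-hand side $H(t)=e^{\arctan t}(1+t^2)^{-1/2}$ is a power series with $H(0)=1$, and differentiating directly shows it satisfies the same ODE. The ODE with $F(0)=1$ determines the coefficients uniquely and recursively: the coefficient of $t^n$ gives $(n+1)f_{n+1}$ in terms of earlier coefficients. Therefore $F=H$.

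The only step needing care is the index shift in the $n^2 M_{n-1}$ term, including checking the $n=0$ boundary. I would verify it against the listed values $1,1,0,-4,\dots$, for instance by checking that the $t^2$ coefficient of $H$ is $0$.
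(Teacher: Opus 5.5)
Your proof is correct and follows the paper's argument: the same bivariate EGF, the same index shift leading to $(1+t^{2})\,\partial_t G = (x-t)\,G$, and separation of variables with $G(x,0)=1$. You only sum from $n=0$ and set $x=1$ before integrating rather than after, and your closing uniqueness check (the recursion $(n+1)f_{n+1} = f_n - n f_{n-1}$ fixes all coefficients once $f_0=1$) makes rigorous the log-based integration step that the paper leaves implicit.
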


\begin{proof}
Consider the bivariate EGF $\mathcal{F}(t,x) = \sum_{n\ge0} M_{n}(x)\,t^{n}/n!$.
We show $\mathcal{F}$ satisfies
\begin{equation}\label{eq:bivariate-ode}
   (1 + t^{2})\,\frac{\partial \mathcal{F}}{\partial t}
   \;=\; (x - t)\,\mathcal{F}.
\end{equation}

Differentiation gives $\partial_{t}\mathcal{F} = \sum_{n\ge0} M_{n+1}(x)\,t^{n}/n!$,
so
\[
   \sum_{n\ge1} \frac{M_{n+1}(x)}{n!}\,t^{n}
   \;=\; \frac{\partial\mathcal{F}}{\partial t} - x.
\]
Divide the recurrence \eqref{eq:meixner-rec} by $n!$ and sum over $n\ge 1$.
The right-hand side contributes
\begin{align*}
   x\!\sum_{n\ge1} \frac{M_n(x)}{n!}\,t^{n} &= x\!\bigl(\mathcal{F} - 1\bigr),\\
   \sum_{n\ge1} \frac{n^{2}\,M_{n-1}(x)}{n!}\,t^{n}
   &= t\!\sum_{n\ge1} \frac{n\,M_{n-1}(x)}{(n-1)!}\,t^{n-1}
    = t\!\sum_{m\ge0} \frac{(m+1)\,M_m(x)}{m!}\,t^{m}
    = t\!\Bigl(\mathcal{F} + t\,\partial_t\mathcal{F}\Bigr).
\end{align*}
Equating the two sides and using $x(\mathcal{F}-1) - t(\mathcal{F} + t\,\partial_t\mathcal{F}) = \partial_t\mathcal{F}-x$ gives
\[
   \frac{\partial\mathcal{F}}{\partial t} - x
   \;=\; x\bigl(\mathcal{F}-1\bigr) - t\!\Bigl(\mathcal{F}+t\,\partial_t\mathcal{F}\Bigr),
\]
which simplifies to \eqref{eq:bivariate-ode}.

Separating variables in \eqref{eq:bivariate-ode}:
\[
   \frac{d\mathcal{F}}{\mathcal{F}}
   \;=\; \frac{x - t}{1 + t^{2}}\,dt
   \;=\; \frac{x}{1 + t^{2}}\,dt \;-\; \frac{t}{1 + t^{2}}\,dt.
\]
Integrating and using $\mathcal{F}(0, x) = M_{0}(x) = 1$:
\[
   \mathcal{F}(t, x) \;=\; \frac{\exp\!\bigl(x\arctan t\bigr)}{\sqrt{1+t^{2}}}.
\]
Setting $x = 1$ gives \eqref{eq:egf-formula}.
\end{proof}

\begin{remark}
The formula $\mathcal{F}(t, x) = \exp(x\arctan t)/\sqrt{1+t^{2}}$ is
recorded in the OEIS triangle A060338~\cite{OEIS:A060338}. We have
re-derived it here from the defining recurrence \eqref{eq:meixner-rec}
for completeness.
\end{remark}

\section{The first-order ODE}\label{sec:ode}

\begin{lemma}\label{lem:ode}
The EGF $F$ from \eqref{eq:egf-formula} satisfies the first-order linear ODE
\begin{equation}\label{eq:ode}
   (1+t^{2})\, F'(t) \;=\; (1-t)\, F(t).
\end{equation}
\end{lemma}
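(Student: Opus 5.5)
The plan is to verify \eqref{eq:ode} directly from the closed form \eqref{eq:egf-formula}, using logarithmic differentiation. A second route, which I would mention as a cross-check, is to specialize the bivariate equation \eqref{eq:bivariate-ode} from the proof of Lemma~\ref{lem:egf} at $x=1$. Since $\mathcal{F}(t,1)=F(t)$ and $\partial_t\mathcal{F}(t,1)=F'(t)$ coefficientwise, \eqref{eq:ode} then follows immediately. That second argument is purely formal and needs no analysis at all.

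For the direct computation, I work on a real interval around $t=0$, say $|t|<1$, where $F$ is analytic and nonvanishing. On that interval
\[
   \log F(t) \;=\; \arctan t \;-\; \tfrac12\log(1+t^{2}).
\]
Differentiating gives
\[
   \frac{F'(t)}{F(t)} \;=\; \frac{1}{1+t^{2}} - \frac{t}{1+t^{2}} \;=\; \frac{1-t}{1+t^{2}}.
\]
Multiplying through by $(1+t^{2})F(t)$ yields \eqref{eq:ode}. To avoid logarithms altogether, one can instead apply the product rule to $e^{\arctan t}\cdot(1+t^{2})^{-1/2}$. The first factor contributes $\frac{1}{1+t^2}F$ and the second contributes $-\frac{t}{1+t^2}F$, with the same result.

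The only point needing care is interpretation. The ODE is meant as an identity of formal power series, because it will be used for coefficient extraction. I would note that the analytic identity on $|t|<1$ implies the formal one, since the Taylor coefficients of both sides agree. Alternatively, the formal specialization route above sidesteps this issue entirely.
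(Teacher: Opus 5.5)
Your proposal is correct and essentially matches the paper's proof, which likewise gives both the $x=1$ specialization of \eqref{eq:bivariate-ode} and the direct logarithmic-derivative computation $F'/F = (1-t)/(1+t^{2})$. Your remark that the analytic identity on $|t|<1$ carries over to an identity of formal power series is a useful clarification that the paper leaves implicit.
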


\begin{proof}
This is the $x = 1$ specialization of \eqref{eq:bivariate-ode}.
Alternatively, compute $F'/F$ directly:
\[
   \frac{F'(t)}{F(t)}
   \;=\; \frac{1}{1+t^{2}} - \frac{t}{1+t^{2}}
   \;=\; \frac{1-t}{1+t^{2}}.
\]
Multiplying both sides by $(1+t^{2})\,F(t)$ gives \eqref{eq:ode}.
\end{proof}

\begin{remark}
We also check identity \eqref{eq:ode} independently by direct symbolic
substitution in the supplementary script
\texttt{verify\_proof.py} (Appendix~\ref{app:verifier}).
\end{remark}

\section{Coefficient extraction}\label{sec:coeff}

We now read off Mathar's recurrence from the ODE \eqref{eq:ode}.
Everything here is formal coefficient extraction.

For an EGF $F(t) = \sum_{n\ge0} a(n)\, t^{n}/n!$, the standard EGF
coefficient identities, valid for $n \ge 0$ with the convention
$a(-1) = 0$, are
\begin{equation}\label{eq:egf-extract}
\begin{aligned}
   \bigl[\tfrac{t^{n}}{n!}\bigr]\, F'(t)        &= a(n+1), \\
   \bigl[\tfrac{t^{n}}{n!}\bigr]\, t^{2}\, F'(t)&= n(n-1)\, a(n-1), \\
   \bigl[\tfrac{t^{n}}{n!}\bigr]\, F(t)         &= a(n), \\
   \bigl[\tfrac{t^{n}}{n!}\bigr]\, t\, F(t)     &= n\, a(n-1).
\end{aligned}
\end{equation}

Rewrite \eqref{eq:ode} as
$F'(t) + t^{2}\,F'(t) - F(t) + t\,F(t) = 0$. Applying
\eqref{eq:egf-extract} to each term, the coefficient of
$t^{n}/n!$ on the left becomes
\begin{equation}\label{eq:cn}
   \bigl(a(n+1) + n(n-1)\,a(n-1)\bigr)
   \,-\,\bigl(a(n) - n\,a(n-1)\bigr).
\end{equation}
Setting \eqref{eq:cn} to zero and collecting terms, noting that
$n(n-1) + n = n^{2}$,
\begin{equation}\label{eq:starshift}
   a(n+1) \,-\, a(n) \,+\, n^{2}\, a(n-1) \;=\; 0,
   \qquad n \ge 1.
\end{equation}
Reindexing $n \mapsto n-1$ produces precisely \eqref{eq:mathar}:
\[
   a(n) - a(n-1) + (n-1)^{2}\, a(n-2) \;=\; 0,
   \qquad n \ge 2.
\]
The boundary case $n = 2$ reads $a(2) - a(1) + 1^{2}\,a(0) = 0 - 1 + 1 = 0$,
which is satisfied by the values $a(0) = 1$, $a(1) = 1$, $a(2) = 0$
given in Section~\ref{sec:intro}. This is Mathar's conjecture. $\hfill\square$

\section{Remarks}\label{sec:remarks}

The route ``EGF $\Rightarrow$ first-order ODE
$\Rightarrow$ coefficient recurrence'' works whenever the EGF
satisfies a linear first-order ODE with polynomial coefficients.
For A214615 the ODE $(1+t^{2})F' = (1-t)F$ has a degree-2
left coefficient, and that degree directly controls the recurrence
order: one gets an order-2 recurrence. Sequences with degree-1 ODE
coefficients yield order-1 recurrences, degree-3 gives order-3, and so
on.

The same template closes the Mathar conjectures for OEIS A025166
(the Laguerre sequence, \cite{Niu2026sequence-4}),
A002627 (\cite{Niu2026d-finite-2}), and
A176677 (\cite{Niu2026sequence-3}).

Worth noting: there is a one-line alternative. Setting $x = 1$
in \eqref{eq:meixner-rec} immediately gives $a(n+1) = a(n) - n^{2}\,a(n-1)$,
which is \eqref{eq:starshift} outright. We prefer the EGF route anyway,
because it puts the closed-form \eqref{eq:egf-formula} on record and
keeps the argument parallel to the companion proofs for A025166, A002627,
and A176677 --- sequences whose defining recurrences are less transparent.

The supplementary script \texttt{verify\_proof.py}
(Appendix~\ref{app:verifier}) verifies Lemma~\ref{lem:ode} symbolically
and checks \eqref{eq:mathar} numerically for $n = 2, \ldots, 500$.

\section{Acknowledgments}

The author declares no competing interests.

AI-assisted tools were used in the preparation of this manuscript,
including for drafting proof outlines and generating the symbolic
verification code. The author verified all mathematical claims
independently and takes full responsibility for the results.

\appendix

\section{Verifier source (machine-checkable)}\label{app:verifier}

The script referenced in \S\ref{sec:remarks} is reproduced in full below.
It depends only on SymPy; no Maple, Mathematica, or any other
external CAS license is required.

\subsection*{verify\_proof.py (symbolic verification of Lemma~\ref{lem:ode})}
\begin{lstlisting}[style=python]
"""
Symbolic verification of the proof of Mathar's 2013 conjecture for A214615.

Theorem (Mathar 2013, OEIS A214615).
    For n >= 2,
        a(n) - a(n-1) + (n-1)^2 * a(n-2) = 0,
    where a(n) = M_n(1), the n-th Meixner polynomial evaluated at x = 1.

Proof (verified here symbolically).

Step 1 (EGF derivation from the Meixner three-term recurrence).
    The Meixner polynomials M_n(x) satisfy
        M_0(x) = 1,  M_1(x) = x,
        M_{n+1}(x) = x * M_n(x) - n^2 * M_{n-1}(x),    n >= 1.
    Set F(t, x) = sum_{n>=0} M_n(x) t^n / n!.  From the three-term
    recurrence (multiply both sides by t^n/n! and sum over n >= 1):
        F'  =  x*F  -  (t*F + t^2*F'),
    where prime denotes d/dt.  Rearranging:
        (1 + t^2) * F'(t, x) = (x - t) * F(t, x).          (*)
    Separating variables  dF/F = (x - t)/(1 + t^2) dt  and integrating:
        F(t, x) = exp(x * arctan(t)) / sqrt(1 + t^2).

Step 2 (ODE for the specialization F_1(t) = F(t, 1) = exp(arctan(t)) / sqrt(1 + t^2)).
    Set x = 1 in (*):
        (1 + t^2) * F_1'(t) = (1 - t) * F_1(t).             (**)

Step 3 (coefficient extraction).
    Rewrite (**) as  F_1' + t^2 F_1' - F_1 + t F_1 = 0.
    With F_1(t) = sum_{n>=0} a(n) t^n / n!, the standard EGF identities give
        [t^n/n!] F_1'(t)      = a(n + 1),
        [t^n/n!] t^2 F_1'(t)  = n*(n - 1) * a(n - 1),
        [t^n/n!] F_1(t)       = a(n),
        [t^n/n!] t  F_1(t)    = n * a(n - 1).
    Equating to zero the coefficient of t^n/n! in (**):
        a(n+1) + n*(n-1)*a(n-1) - a(n) + n*a(n-1) = 0,
    i.e.
        a(n+1) - a(n) + n^2 * a(n-1) = 0,    n >= 1.   [n*(n-1)+n = n^2]
    Reindexing n -> n-1 we obtain Mathar's form:
        a(n) - a(n-1) + (n-1)^2 * a(n-2) = 0,    n >= 2.   QED

This script verifies Steps 1-2 (the ODE identity) symbolically and
Step 3 numerically.
"""
from sympy import atan, diff, exp, factorial, simplify, sqrt, symbols

t = symbols("t")
F1 = exp(atan(t)) / sqrt(1 + t**2)


def main() -> None:
    Fp = diff(F1, t)

    # Step 1/2: verify (1 + t^2) * F1'(t) - (1 - t) * F1(t) = 0 identically.
    ode_lhs = (1 + t**2) * Fp - (1 - t) * F1
    print("Verifying  (1 + t^2) * F1'(t) - (1 - t) * F1(t)  is identically 0 ...")
    assert simplify(ode_lhs) == 0, ode_lhs
    print("  OK.")

    # Step 3: numerical check -- drive recurrence from a(0), a(1)
    # and compare against direct evaluation of a(n) = M_n(1).
    def meixner_at_1(nmax: int) -> list:
        """Compute a(n) = M_n(1) via the three-term recurrence."""
        a = [0] * (nmax + 1)
        a[0] = 1
        if nmax >= 1:
            a[1] = 1
        for k in range(1, nmax):
            a[k + 1] = a[k] - k**2 * a[k - 1]
        return a

    N = 500
    a = meixner_at_1(N)
    print()
    print(f"First 10 terms of A214615: {a[:10]}")

    print()
    print(f"Checking Mathar's recurrence on n = 2..{N} ...")
    for n in range(2, N + 1):
        lhs = a[n] - a[n - 1] + (n - 1) ** 2 * a[n - 2]
        assert lhs == 0, (n, lhs)
    print(f"  OK -- recurrence holds for n = 2..{N}.")

    # Print the coefficient extraction symbolically.
    print()
    print("Symbolic ODE coefficients:")
    print("  q1(t) = (1 + t^2)   (coefficient of F1'(t))")
    print("  q0(t) = -(1 - t)    (coefficient of F1(t))")
    print()
    print("Coefficient extraction (EGF F1(t) = sum a(n) t^n/n!):")
    print("  [t^n/n!] q1(t) F1'(t) = a(n+1) + n*(n-1)*a(n-1)")
    print("  [t^n/n!] q0(t) F1(t)  = -a(n) + n*a(n-1)")
    print("  Sum to zero: a(n+1) - a(n) + n^2*a(n-1) = 0  [since n(n-1)+n = n^2]")
    print("  Reindex n -> n-1: a(n) - a(n-1) + (n-1)^2*a(n-2) = 0.")
    print()
    print("This is Mathar's 2013 conjecture.  QED.")


if __name__ == "__main__":
    main()
\end{lstlisting}

\end{document}